\newtheorem{theorem}{Theorem}[section]
\newtheorem{definition}[theorem]{Definition}
\newtheorem{remark}[theorem]{Remark}
\newenvironment{proof}{\smallskip\par{\sc Proof.}\enspace}%
 {{\unskip\nobreak\hfil\penalty50\hskip2em
          \hbox{}\nobreak\hfil{\rule[-1pt]{5pt}{10pt}}
          \parfillskip=0pt\finalhyphendemerits=0
          \par\medskip}} 
\def\section{\@startsection {section}{1}{\z@}{3.25ex plus 1ex minus
 .2ex}{1.5ex plus .2ex}{\large\bf}}
\def\subsection{\@startsection{subsection}{2}{\z@}{3.25ex plus 1ex minus
 .2ex}{1.5ex plus .2ex}{\normalsize\bf}}
\begin{document}

\begin{center}
\LARGE\textsf{A unified approach to local limit theorems in Gaussian spaces and the law of small numbers}
\end{center}

\vspace*{.3in}

\begin{center}
\sc
\large{Alberto Lanconelli\footnote{Dipartimento di Matematica, Universit\'a degli Studi di Bari Aldo Moro, Via E. Orabona 4, 70125 Bari - Italia. E-mail: \emph{alberto.lanconelli@uniba.it}}}\\
\end{center}

\vspace*{.3in}

\begin{abstract}
Through a reformulation of the local limit theorem and law of small numbers, which is obtained by working in the spaces naturally associated to the limiting distributions, we discover a general and abstract framework for the investigation of that type of limit theorems. From this new perspective, the convolution and scaling operators utilized in the classical results mentioned before will be identified with the Wick product and second quantization operators, respectively. And here is the advantage of our approach: definitions and most of the properties of Wick products and second quantization operators do not depend (\emph{mutatis mutandis}) on the underlying probability measure. Then, with the help of H\"older-Young-type inequalities for Gaussian  and Poisson Wick products proved in previous papers, we show the $\mathcal{L}^1$ convergence of the densities towards the desired limit. We remark that our approach extends without additional assumptions to infinite dimensional Gaussian spaces.
   
\end{abstract}

\bigskip

\noindent \textbf{Keywords:} limit theorems, orthogonal polynomials, Wick product, second quantization operators 

\bigskip

\noindent\textbf{Mathematics Subject Classification (2000):} 60F25, 60H30

\section{Introduction}

The aim of the present paper is to propose a unified framework for proving the two following well known limit theorems:
\begin{theorem}[Local Limit Theorem]\label{LLT}
Let $\{X_n\}_{n\geq 1}$ be a sequence of independent and identically distributed real valued random variables. Assume that the common law of the $X_n$'s is absolutely continuous with respect to the one dimensional Lebesgue measure and that $E[X_n]=0$ and $Var(X_n)=1$ for all $n\geq 1$. \\
Then the density of $\frac{X_1+\cdot\cdot\cdot+X_n}{\sqrt{n}}$ converges in $\mathcal{L}^1(\mathbb{R})$, as $n$ goes to infinity, to the one dimensional standard Gaussian density.
\end{theorem}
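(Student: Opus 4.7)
The plan is to lift the statement into the Gaussian-weighted space $\mathcal{L}^{1}(\mathbb{R},\phi(x)\,dx)$, where $\phi$ is the standard normal density, and to translate convolution of densities and rescaling by $1/\sqrt{n}$ into intrinsic operations on that space --- namely the Wick product $\diamond$ and the second quantization operator $\Gamma(q)$ whose action on the $k$-th Wiener chaos is multiplication by $q^{k}$. Factor the common law as $f=\phi\cdot g$ with $g:=f/\phi$, and write $f_{n}=\phi\cdot g_{n}$ for the density of $(X_{1}+\cdots+X_{n})/\sqrt{n}$. The conclusion of the theorem is then equivalent to the single assertion that $\|g_{n}-1\|_{\mathcal{L}^{1}(\phi)}\to 0$, since $\|f_{n}-\phi\|_{\mathcal{L}^{1}(\mathbb{R})}=\|g_{n}-1\|_{\mathcal{L}^{1}(\phi)}$, and the target limit $\phi$ corresponds to the constant function $1$.

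The backbone of the argument would be the representation
\[
g_{n} \;=\; \Gamma\!\Bigl(\tfrac{1}{\sqrt{n}}\Bigr)\bigl(g^{\diamond n}\bigr),
\]
which can be checked on the Hermite basis using $H_{j}\diamond H_{k}=H_{j+k}$ together with an orthogonal change of variables in $\mathbb{R}^{n}$ that isolates the diagonal direction $(1,\ldots,1)/\sqrt{n}$ and integrates out the $n-1$ independent Gaussians orthogonal to it. Under this dictionary, the hypotheses $\int f=1$, $E[X_{n}]=0$ and $\mathrm{Var}(X_{n})=1$ translate, via the inner products $\langle g,H_{k}\rangle_{\phi}$, into $\langle g,1\rangle_{\phi}=1$ and $\langle g,H_{k}\rangle_{\phi}=0$ for $k=1,2$. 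Consequently $g=1+h$ with $h$ supported on Wiener chaos of order $\geq 3$, a fact which is what makes the variance normalization do its work.

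With the representation in hand, the endgame is a combinatorial-plus-analytic estimate. The binomial formula for the Wick product yields
\[
g_{n}-1 \;=\; \Gamma\!\Bigl(\tfrac{1}{\sqrt{n}}\Bigr)\sum_{k=1}^{n}\binom{n}{k}\,h^{\diamond k},
\]
and since $h^{\diamond k}$ lives in chaos of order $\geq 3k$, second quantization contributes a factor of at most $n^{-3k/2}$ to the $k$-th term, while $\binom{n}{k}\leq n^{k}/k!$; heuristically the $k$-th contribution is $O(n^{-k/2})$, so the whole remainder is of order $1/\sqrt{n}$. The main obstacle is making this heuristic rigorous \emph{in the $\mathcal{L}^{1}(\phi)$ norm}: one must bound $\|\Gamma(1/\sqrt{n})\,h^{\diamond k}\|_{\mathcal{L}^{1}(\phi)}$ by a suitable power of $\|h\|_{\mathcal{L}^{p}(\phi)}$, which is precisely the role of the Hölder--Young-type inequalities for Gaussian Wick products mentioned in the abstract, and one must secure sufficient integrability of $g=f/\phi$ (only $g\in\mathcal{L}^{1}(\phi)$ is automatic), presumably by a preliminary truncation and density argument. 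Once these ingredients are lined up, summing the dominating series gives $\|g_{n}-1\|_{\mathcal{L}^{1}(\phi)}\to 0$, and hence $\|f_{n}-\phi\|_{\mathcal{L}^{1}(\mathbb{R})}\to 0$.
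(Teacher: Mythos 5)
Your setup is faithful to the paper's framework: the reduction of the $\mathcal{L}^1(dx)$ statement to $\Vert g_n-1\Vert_{\mathcal{L}^1(\mu)}\to 0$, the representation $g_n=\Gamma(1/\sqrt n)(g^{\diamond n})$ (Theorem \ref{gaussian wick interpretation} with $\alpha_i=1/\sqrt n$), and the translation of the moment hypotheses into $g=1+h$ with $h$ supported on chaos of order $\geq 3$ are exactly the ingredients of Section 2. But note first that the paper does \emph{not} prove Theorem \ref{LLT}: it attributes it to Prokhorov, and what it actually establishes (Theorem \ref{main theorem}) is a smoothed statement about $\sqrt{n/(n+b_n)}\,(X_1+\cdots+X_n)/\sqrt n+\sqrt{b_n/(n+b_n)}\,Z$ with $b_n/n^{2/3}\to\infty$, under the additional hypothesis $f=d\mu_{X_1}/d\mu\in\mathcal{L}^2(\mu)$; the introduction explicitly names this as the price of the method. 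You are therefore attempting something strictly stronger than what these tools are shown to deliver, and the endgame of your sketch is precisely where the attempt breaks.

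The concrete gap is the step ``second quantization contributes a factor of at most $n^{-3k/2}$ to the $k$-th term.'' That gain is an $\mathcal{L}^2$ phenomenon: for $F$ supported on chaos $\geq m$ one has $\Vert\Gamma(\lambda)F\Vert_1\leq\Vert\Gamma(\lambda)F\Vert_2\leq\lambda^m\Vert F\Vert_2$, so to exploit the chaos order of $h^{\diamond k}$ you must control $\Vert h^{\diamond k}\Vert_2$. The Wick product is unbounded on $\mathcal{L}^2$, so $h^{\diamond k}$ need not be square integrable; and even in the simplest case $h=\gamma_3h_3$, where $\Vert h^{\diamond k}\Vert_2=|\gamma_3|^k\sqrt{(3k)!}$, the factor $\binom nk n^{-3k/2}\sqrt{(3k)!}\,|\gamma_3|^k$ behaves like $(27k|\gamma_3|^2/(en))^{k/2}$ and is only summable over $k\leq n$ under a smallness condition on $|\gamma_3|$. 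The only estimate valid in $\mathcal{L}^1$ is the H\"older--Young inequality (Theorem \ref{gaussian young inequality}), which yields $\Vert\Gamma(1/\sqrt n)h^{\diamond k}\Vert_1\leq\Vert h\Vert_1^k$ with \emph{no} gain from the chaos order, and then $\sum_{k=1}^n\binom nk\Vert h\Vert_1^k=(1+\Vert h\Vert_1)^n-1$ diverges. The paper avoids exactly this by replacing your binomial expansion with the telescoping sum $\Gamma(\lambda)f^{\diamond n}-1=\sum_{j=1}^n\Gamma(\lambda)f^{\diamond(j-1)}\diamond\bigl(\Gamma(\lambda)f-1\bigr)$, extracting the chaos-order gain from a single factor via $\Vert\Gamma(1/\sqrt{b_n+1})f-1\Vert_1\leq(b_n+1)^{-3/2}\Vert f-1\Vert_2$ at the cost of a factor $n$ from the number of terms; making $n(b_n+1)^{-3/2}\to0$ is precisely why the smoothing $b_n\gg n^{2/3}$ is indispensable rather than cosmetic (with $b_n=0$ the bound degenerates to $n\Vert f-1\Vert_1$). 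Finally, the integrability you defer to ``a preliminary truncation and density argument'' is a genuine extra hypothesis, not a technicality: a variance-one density with tails heavier than Gaussian has $f/\phi\notin\mathcal{L}^p(\mu)$ for every $p>1$, and neither your sketch nor the paper provides an approximation scheme transferring $\mathcal{L}^1$ convergence of the normalized sums from such a dense subclass to the general law.
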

\begin{theorem}[Law of Small Numbers]\label{LSN}
Let $\{X_n\}_{n\geq 1}$ be a sequence of independent and identically distributed random variables taking values on $\mathbb{N}_0$. Assume that $E[X_n]=a$ for some $a>0$ and all $n\geq 1$.\\ 
Then the law of $T_{\frac{1}{n}}X_1+\cdot\cdot\cdot+T_{\frac{1}{n}}X_n$ converges, as $n$ goes to infinity, to the Poisson distribution with intensity $a>0$.
(Here, $\mathbb{N}_0:=\mathbb{N}\cup\{0\}$ and for $\alpha\in[0,1]$, $T_\alpha X$ denotes the $\alpha$-thinning of the law of the random variable $X$. See Definition \ref{thinning} below.)
\end{theorem}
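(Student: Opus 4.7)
The plan is to cast Theorem \ref{LSN} as an $\mathcal{L}^1$ convergence of densities with respect to the limiting Poisson measure $\pi_a$, and then to exploit the dictionary announced in the abstract between probabilistic operations on laws and algebraic operations on densities in Poisson space. Since $\pi_a$ charges every point of $\mathbb{N}_0$, the common law $\mu$ of the $X_n$'s has a well defined density $f := d\mu/d\pi_a$. The hypothesis $E[X_n] = a$ is equivalent to $\int (x-a) f(x) \, d\pi_a(x) = 0$, i.e.\ to the vanishing of the first coefficient of $f$ in the Charlier orthogonal basis $\{C_k(\cdot; a)\}_{k \geq 0}$; together with $\int f\, d\pi_a = 1$ this yields the formal expansion
\[
f \;=\; 1 + \sum_{k \geq 2} \hat f_k \, C_k(\cdot; a),
\]
to be read in a distributional sense whenever higher moments of $\mu$ are absent.

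Next I would invoke the two structural identifications that drive the paper: the thinning operator $T_{1/n}$, acting on laws, corresponds at the level of Charlier coefficients of the density to the second quantization operator $\Gamma(1/n)$, which multiplies the $k$-th chaos by $(1/n)^k$; and the convolution of independent laws corresponds to the Poisson Wick product $\diamond$. Combined with the fact that $T_{1/n}X_1,\ldots,T_{1/n}X_n$ remain independent, these identifications give the compact formula
\[
g_n \;:=\; \frac{d\, \mathrm{Law}(T_{1/n}X_1 + \cdots + T_{1/n}X_n)}{d\pi_a} \;=\; \bigl(\Gamma(1/n)\, f\bigr)^{\diamond n}.
\]
Since $\Gamma(1/n) f - 1 = \sum_{k \geq 2}(1/n)^k \hat f_k C_k(\cdot; a)$ starts at order $1/n^2$ thanks to the killing of the first Charlier coefficient, pairing this decay with the H\"older--Young inequality for Poisson Wick products proved in the author's earlier work should give $\|g_n - 1\|_{\mathcal{L}^1(\pi_a)} \to 0$, which is precisely the (total variation) convergence of laws claimed in the theorem.

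The main obstacle I foresee is the very weak moment assumption: with only $E[X_n] = a$ in hand, $f$ need not belong to $L^p(\pi_a)$ for any $p > 1$, so neither the $n$-fold Wick power nor the H\"older--Young inequality applies directly to $f$. One has to argue that $\Gamma(1/n)$ smooths $f$ into the admissible range of spaces for every $n \geq 2$, that the resulting quantitative bound on $\|g_n - 1\|_{\mathcal{L}^1(\pi_a)}$ shrinks as $n \to \infty$, and that the vanishing of the first Charlier coefficient of $f$ supplies the extra power of $n$ needed to absorb the exponent in the $n$-fold Wick product. Balancing the $\Gamma(1/n)$-contraction against the $n$-th Wick power is the technical heart of the argument, and is where the full strength of the second quantization calculus surveyed in the paper has to be brought to bear.
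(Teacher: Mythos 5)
Your framework is exactly the paper's: pass to the density $f=d\nu_{X_1}/d\nu$ with respect to the Poisson reference measure, identify thinning with the second quantization operator $\Gamma(1/n)$ and convolution of independent laws with the Poisson Wick product, so that the law in question has density $(\Gamma(1/n)f)^{\diamond n}$, and then try to beat the $n$-fold product with the order-$n^{-2}$ smallness of $\Gamma(1/n)f-1$ coming from the vanishing of the first Charlier coefficient. But the final step does not close, and this is precisely where the paper itself stops short of Theorem \ref{LSN} as stated: it proves only a smoothed version (Theorem \ref{main theorem poisson}), in which $T_{1/n}$ is replaced by $T_{1/(n+b_n)}$ and an independent Poisson variable is mixed in, with $b_n/\sqrt{n}\to+\infty$ and $b_n/n\to 0$. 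The obstruction is visible in the telescoping identity $(\Gamma(1/n)f)^{\diamond n}-1=\sum_{j=1}^{n}(\Gamma(1/n)f)^{\diamond (j-1)}\diamond\big(\Gamma(1/n)f-1\big)$. To apply the Poisson H\"older--Young inequality (Theorem \ref{poisson young inequality}) to the $j$-th term you must write the two factors as $\Gamma(\alpha_1)(\cdot)$ and $\Gamma(\alpha_2)(\cdot)$ with $\alpha_1+\alpha_2=1$; the first factor forces $\alpha_1=(j-1)/n$, leaving $\alpha_2=(n-j+1)/n$ and the bound $\Vert\Gamma(1/(n-j+1))f-1\Vert_1$ for the second. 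Summing over $j$ yields $\sum_{m=1}^{n}\Vert\Gamma(1/m)f-1\Vert_1$, whose $m=1$ term is the fixed constant $\Vert f-1\Vert_1>0$ unless $X_1$ is already Poisson distributed. So the bound does not tend to zero; the extra $b_n$ in the paper exists exactly so that every term carries a contraction $\Gamma(1/(b_n+1))$, producing the estimate $n(b_n+1)^{-2}\to 0$.

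Two further points. First, your worry about integrability is well founded: the Charlier expansion of $f$ and the $\mathcal{L}^2$ bound on $\Gamma(\lambda)f-1$ require $f\in\mathcal{L}^2(\mathbb{N}_0,2^{\mathbb{N}_0},\nu)$, which the paper adds as a hypothesis in Theorem \ref{main theorem poisson}. But your proposed remedy --- that $\Gamma(1/n)$ ``smooths $f$ into the admissible range'' --- is not supported by the tools available here: the Poisson Young inequality keeps the same exponent $p$ on both sides, and no Poisson hypercontractive estimate is claimed (the relaxation via Nelson's estimate is stated only in the Gaussian section). Second, a correct write-up along these lines must either add the smoothing and the $\mathcal{L}^2$ hypothesis and prove the weaker Theorem \ref{main theorem poisson}, or supply a genuinely different argument for the unsmoothed statement; as it stands the proposal establishes neither.
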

Theorem \ref{LLT} was obtained by Prokhorov in \cite{Prokhorov}; Ranga Rao and Varadarajan \cite{RRV} proved point-wise convergence of the densities while Gnedenko \cite{Gnedenko} studied uniform convergence. We also mention the work of Barron \cite{Barron} where the relative entropy (or Kullback-Leibler divergence) of $\frac{X_1+\cdot\cdot\cdot+X_n}{\sqrt{n}}$ with respect to the standard Gaussian measure is shown to converge to zero (monotonically along a certain subsequence).\\
Theorem \ref{LSN} is a generalization of the famous Binomial-to-Poisson convergence. See for instance Barbour et al. \cite{BHJ} for a systematic treatment of Poisson approximation. In Harremo\"es et al. \cite{HJK} the authors proved an information theoretic version of Theorem \ref{LSN} in the spirit of the result of Barron \cite{Barron}.\\
The main novelty of our approach consists in utilizing a common framework for the representation of the density of the random variable $\frac{X_1+\cdot\cdot\cdot+X_n}{\sqrt{n}}$ in Theorem \ref{LLT} and  the distribution of $T_{\frac{1}{n}}X_1+\cdot\cdot\cdot+T_{\frac{1}{n}}X_n$ in Theorem \ref{LSN}. This common framework is based on the one dimensional version of two basic tools from the analysis on Wiener spaces: the Wick product, which will play the role of the convolution product, and the second quantization operator (also known as Ornstein-Uhlenbeck semigroup), which will play the role of the scaling operator (we refer the reader to Janson \cite{J} for all the details on these tools). The crucial ingredient in our construction is to reformulate the problems in their natural frameworks: in the case of the Local Limit Theorem, we work with densities with respect to the standard Gaussian measure while, in the case of the Law of Small Numbers, we investigate the behavior of the Radon-Nikodym derivatives with respect to the Poisson distribution with intensity $a>0$. This idea is similar to the one proposed in Barron \cite{Barron} for the Gaussian measure and in Harremo\"es et al. \cite{HJK} for the Poisson distribution.\\
Once the densities are taken with respect to the right reference measures, then the convergence theorems can be formulated in the same manner (see Theorem \ref{main theorem} and Theorem \ref{main theorem poisson} below) and proved with the help of general inequalities for Wick products and second quantization operators (see Theorem \ref{gaussian young inequality} and Theorem \ref{poisson young inequality} below). We obtain $\mathcal{L}^1$ convergence of the densities towards the constant function one, that corresponds to the Radon-Nikodym derivative (with respect to the reference measure) of the desired limit. Morever, the moment conditions ($E[X_n]=0$ and $Var(X_n)=1$ for the local limit theorem and $E[X_n]=a$ for the law of small Numbers) take a very natural form: they are equivalent to the orthogonality between the Radon-Nikodym density of the $X_n$ and the first and second orthogonal polynomials associated to the reference measure (see the proofs of Theorem \ref{main theorem} and Theorem \ref{main theorem poisson} below). The idea of the proofs of the main theorems is taken from \cite{LS2015}. \\
However, we need to pay a price for the generality of our approach. We are in fact able to prove the convergence for a smoothed version of the quantity investigated in the classical version of the theorems.  Nevertheless, our technique is, at least in the Gaussian case, dimension independent (see the Remark \ref{infinite dimensions} and Remark \ref{assumptions} below).\\
The paper is organized as follows: Section 2 deals with the local limit theorem while Section 3 is devoted to the law of small numbers. The two sections are structured in exactly the same manner with the aim of underlying the unified nature of the approach. For more information on orthogonal polynomials we refer the reader to Chihara \cite{Chihara} and to Szeg\"o\cite{Szego}.

\section{Gaussian local limit theorem}

Let $\mu$ denote the standard one dimensional Gaussian measure, i.e.
\begin{eqnarray*}
\mu(A):=\int_A\frac{1}{\sqrt{2\pi}}\exp\Big\{-\frac{x^2}{2}\Big\}dx,\quad A\in\mathcal{B}(\mathbb{R})
\end{eqnarray*}
 and consider the real Hilbert space $\mathcal{L}^2(\mathbb{R},\mathcal{B}(\mathbb{R}),\mu)$ 
together with the family of monic Hermite polynomials $\{h_n\}_{n\geq 0}$. These polynomials constitute an orthogonal basis for the space $\mathcal{L}^2(\mathbb{R},\mathcal{B}(\mathbb{R}),\mu)$ and therefore any element $f$ of that space can be represented as
\begin{eqnarray*}
f(x)=\sum_{j\geq 0}\gamma_jh_j(x).
\end{eqnarray*}
Note that 
\begin{eqnarray*}
\int_{\mathbb{R}}f^2(x)d\mu(x)=\sum_{j\geq 0}j!|\gamma_j|^2,
\end{eqnarray*}
since the polynomials are not normalized. We now introduce a scaling operator and an unbounded bilinear multiplication on $\mathcal{L}^2(\mathbb{R},\mathcal{B}(\mathbb{R}),\mu)$; they will be our fundamental tools in proving the limit theorems under consideration. For $\lambda\in [-1,1]$, the \emph{second quantization operator} is the bounded linear operator
\begin{eqnarray*}
\Gamma(\lambda):\mathcal{L}^2(\mathbb{R},\mathcal{B}(\mathbb{R}),\mu)&\to& \mathcal{L}^2(\mathbb{R},\mathcal{B}(\mathbb{R}),\mu)\\
\sum_{j\geq 0}\gamma_jh_j(x)&\mapsto& \sum_{j\geq 0}\lambda^j\gamma_jh_j(x).
\end{eqnarray*}
This map can be extended to a bounded linear operator from $\mathcal{L}^p(\mathbb{R},\mathcal{B}(\mathbb{R}),\mu)$ to $\mathcal{L}^p(\mathbb{R},\mathcal{B}(\mathbb{R}),\mu)$ for any $p\in[1,+\infty]$ with norm equal to one. Moreover for $f,g\in\mathcal{L}^2(\mathbb{R},\mathcal{B}(\mathbb{R}),\mu)$ with
\begin{eqnarray*}
f(x)=\sum_{j\geq 0}\gamma_jh_j(x)\quad\mbox{ and }\quad g(x)=\sum_{j\geq 0}\delta_jh_j(x),
\end{eqnarray*}
we define the \emph{Wick product} of $f$ and $g$ to be the function
\begin{eqnarray*}
(f\diamond g)(x):=\sum_{j\geq 0}\Big(\sum_{i=0}^jf_ig_{j-i}\Big)h_j(x).
\end{eqnarray*}
The Wick product, combined with second quantization operators, possesses the following clear probabilistic interpretation. (This fact is implicitly contained in \cite{DLS} and further analyzed in \cite{LSportelli}, where a similar statement for the chi-square distribution is obtained).

\begin{theorem}\label{gaussian wick interpretation}
Let $X_1,...,X_n$ be independent real valued random variables and denote by $\mu_{X_1},...,\mu_{X_n}$ the corresponding laws on $\mathbb{R}$, respectively. Assume that the measures $\mu_{X_1},...,\mu_{X_n}$
are absolutely continuous with respect to $\mu$. Then, for any $\alpha_1,...\alpha_n\in [-1,1]$ such that
\begin{eqnarray*}
\alpha_1^2 + \cdot\cdot\cdot+\alpha_n^2 =1,
\end{eqnarray*}
we have
\begin{eqnarray}\label{gaussian wick}
\Gamma(\alpha_1)\frac{d\mu_{X_1}}{d\mu}\diamond\cdot\cdot\cdot\diamond\Gamma(\alpha_n)\frac{d\mu_{X_n}}{d\mu} & = & \frac{d\mu_{\alpha_1 X_1+\cdot\cdot\cdot+\alpha_n X_n}}{d\mu},
\end{eqnarray}
where $\frac{dQ}{d\mu}$ denotes the Radon--Nikodym derivative of the
measure $Q$ with respect to the reference measure $\mu$ and $\mu_{\alpha_1 X_1+\cdot\cdot\cdot+\alpha_n X_n}$ denotes the law of the random variable $\alpha_1 X_1+\cdot\cdot\cdot+\alpha_n X_n$.
\end{theorem}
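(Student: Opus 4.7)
I would prove (\ref{gaussian wick}) by expanding both sides in the Hermite basis and matching coefficients, reducing first to the two-variable case by induction. The inductive step rests on two elementary structural identities that follow at once from the Hermite expansion: the semigroup property $\Gamma(\lambda)\Gamma(\eta)=\Gamma(\lambda\eta)$ and the fact that each $\Gamma(\lambda)$ is an algebra homomorphism for the Wick product, i.e.\ $\Gamma(\lambda)(f\diamond g)=\Gamma(\lambda)f\diamond\Gamma(\lambda)g$. Setting $\beta:=\sqrt{\alpha_1^2+\cdots+\alpha_{n-1}^2}$ and $\tilde\alpha_i:=\alpha_i/\beta$, the inductive hypothesis provides an expression for $d\mu_Y/d\mu$, where $Y=\tilde\alpha_1 X_1+\cdots+\tilde\alpha_{n-1}X_{n-1}$; applying $\Gamma(\beta)$ to both sides and invoking the two identities above rewrites the left-hand side of (\ref{gaussian wick}) as $\Gamma(\beta)(d\mu_Y/d\mu)\diamond\Gamma(\alpha_n)f_n$, to which the $n=2$ case (with weights $\beta,\alpha_n$ satisfying $\beta^2+\alpha_n^2=1$) applies.

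For the base case $n=2$, write $f_i=d\mu_{X_i}/d\mu=\sum_{j\ge 0}\gamma_j^{(i)}h_j$; the definitions give
$$\Gamma(\alpha)f_1\diamond\Gamma(\beta)f_2=\sum_{m\ge 0}\Bigl(\sum_{k=0}^m\alpha^k\beta^{m-k}\gamma_k^{(1)}\gamma_{m-k}^{(2)}\Bigr)h_m.$$
On the other hand, the $m$-th Hermite coefficient of $d\mu_{\alpha X_1+\beta X_2}/d\mu$ is $\mathbb{E}[h_m(\alpha X_1+\beta X_2)]/m!$. The crucial ingredient is the Hermite addition formula
$$h_m(\alpha x+\beta y)=\sum_{k=0}^m\binom{m}{k}\alpha^k\beta^{m-k}h_k(x)h_{m-k}(y)\qquad(\alpha^2+\beta^2=1),$$
which falls out of the generating function $e^{tx-t^2/2}=\sum_n\frac{t^n}{n!}h_n(x)$ by splitting the exponent into $(t\alpha x-(t\alpha)^2/2)+(t\beta y-(t\beta)^2/2)$. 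Taking expectation, invoking independence of $X_1,X_2$, and using the orthogonality relation $\mathbb{E}[h_k(X_i)]=k!\,\gamma_k^{(i)}$ yields precisely $m!$ times the Wick-product coefficient above, since $\binom{m}{k}k!(m-k)!=m!$. Completeness of $\{h_m\}_{m\ge 0}$ in $\mathcal{L}^2(\mu)$ then concludes the argument.

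The principal technical obstacle is that the Hermite expansions used above are a priori meaningful only for $\mathcal{L}^2(\mu)$ densities, whereas the hypothesis grants only absolute continuity, i.e.\ $\mathcal{L}^1(\mu)$. For $|\alpha_i|<1$ this is handled by Nelson's hypercontractivity of $\Gamma(\alpha_i)$, which regularizes each factor into $\mathcal{L}^2(\mu)$ (or better) before any Wick product is taken; the extremal case in which some $|\alpha_i|=1$ collapses, by the constraint $\sum_j\alpha_j^2=1$, to a trivial one-variable statement since $\Gamma(\pm 1)f=f(\pm\,\cdot)$ and the Gaussian symmetry gives $d\mu_{\pm X}/d\mu(x)=f(\pm x)$. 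Alternatively, one approximates each $f_i$ in $\mathcal{L}^1(\mu)$ by bounded (hence $\mathcal{L}^2$) densities, verifies the identity on this dense subclass by the argument above, and passes to the limit using the $\mathcal{L}^p$-continuity of $\Gamma(\alpha_i)$ recorded in the paragraph preceding the theorem.
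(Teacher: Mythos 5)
The paper does not actually prove this theorem --- it defers to \cite{DLS} and \cite{LSportelli} --- so there is no internal proof to compare against; judged on its own, your argument is the natural one and its core is correct. The addition formula $h_m(\alpha x+\beta y)=\sum_{k=0}^m\binom{m}{k}\alpha^k\beta^{m-k}h_k(x)h_{m-k}(y)$ for $\alpha^2+\beta^2=1$, independence, and the relation $E[h_k(X_i)]=k!\,\gamma_k^{(i)}$ do match the Hermite coefficients of the two sides, and the reduction to $n=2$ via $\Gamma(\lambda)\Gamma(\eta)=\Gamma(\lambda\eta)$ and the homomorphism property is sound (with the harmless degenerate case $\beta=0$ handled separately). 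One point you should add: invoking completeness of $\{h_m\}$ only identifies two elements of $\mathcal{L}^2(\mu)$, so you must first know that $d\mu_{\alpha X_1+\beta X_2}/d\mu$ itself lies in $\mathcal{L}^2(\mu)$; matching coefficients alone does not give this. It follows from rotation invariance of $\mu\otimes\mu$: the density of $\alpha X_1+\beta X_2$ with respect to $\mu$ is $u\mapsto\int_{\mathbb R} f_1(\alpha u-\beta v)f_2(\beta u+\alpha v)\,d\mu(v)$, a conditional expectation of $f_1\otimes f_2\in\mathcal{L}^2(\mu\otimes\mu)$, hence square integrable.

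The genuine error is in your treatment of $\mathcal{L}^1$ densities: Nelson hypercontractivity does \emph{not} regularize from $\mathcal{L}^1(\mu)$ to $\mathcal{L}^2(\mu)$. The sharp statement is $\Vert\Gamma(\lambda)f\Vert_q\le\Vert f\Vert_p$ exactly when $q\le 1+\lambda^{-2}(p-1)$, which at $p=1$ forces $q\le 1$: there is no gain of integrability starting from $\mathcal{L}^1$. Indeed $\Gamma(\lambda)$ cannot map $\mathcal{L}^1(\mu)$ into $\mathcal{L}^2(\mu)$ at all: by self-adjointness and the closed graph theorem this would make $\Gamma(\lambda):\mathcal{L}^2(\mu)\to\mathcal{L}^\infty(\mu)$ bounded, i.e.\ the Mehler kernel would satisfy $\sup_x\Vert p_\lambda(x,\cdot)\Vert_{\mathcal{L}^2(\mu)}<\infty$, whereas a direct computation gives $\int p_\lambda(x,y)^2\,d\mu(y)=C(\lambda)\exp\{\lambda^2x^2/(1+\lambda^2)\}$, unbounded in $x$. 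So only your second route survives: approximate each $f_i$ in $\mathcal{L}^1(\mu)$ by $\mathcal{L}^2$ densities and pass to the limit. Note that this step needs the $\mathcal{L}^1$ form of the Young inequality, $\Vert\Gamma(\alpha_1)g_1\diamond\cdots\diamond\Gamma(\alpha_n)g_n\Vert_1\le\Vert g_1\Vert_1\cdots\Vert g_n\Vert_1$ (the version from \cite{LS-} quoted in the proof of Theorem \ref{main theorem}), both to extend the definition of the left-hand side to $\mathcal{L}^1$ densities and to control the limit, while the right-hand side converges because passing from the joint law to the law of $\alpha_1X_1+\cdots+\alpha_nX_n$ is a total-variation contraction in each argument. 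With these repairs the proof is complete.
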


\noindent With this probabilistic interpretation in mind, the next theorem can be considered as a Gaussian Young inequality (see \cite{DLS} for the proof).
Here and for the rest of the present section, the symbol $\Vert\cdot\Vert_p$ will denote the usual norm in $\mathcal{L}^{p}(\mathbb{R},\mathcal{B}(\mathbb{R}),\mu)$.
\begin{theorem}\label{gaussian young inequality}
Let $\alpha_1,...,\alpha_n\in [-1,1]$ be such that $\alpha_1^2+\cdot\cdot\cdot+\alpha_n^2 = 1$ and let
$p_1,...,p_n,r \in [1$, $+\infty]$ satisfy the following condition
\begin{eqnarray*}
\frac{\alpha_1^2}{p_1-1}+\cdot\cdot\cdot+\frac{\alpha_n^2}{p_n-1}=\frac{1}{r-1}.
\end{eqnarray*}
If $f_i\in\mathcal{L}^{p_i}(\mathbb{R},\mathcal{B}(\mathbb{R}),\mu)$ for each $i=1,...,n$, then $\Gamma(\alpha_1)f_1 \diamond
\cdot\cdot\cdot\diamond\Gamma(\alpha_n)f_n\in\mathcal{L}^r(\mathbb{R},\mathcal{B}(\mathbb{R}),\mu)$. More precisely,
\begin{eqnarray}\label{gaussian young}
\parallel\Gamma(\alpha_1)f\diamond\cdot\cdot\cdot\diamond\Gamma(\alpha_n)f_n\parallel_r & \leq &
\parallel f_1\parallel_{p_1}\cdot\cdot\cdot\parallel f_n\parallel_{p_n}.
\end{eqnarray}
\end{theorem}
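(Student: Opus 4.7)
The plan is to reduce the inequality to the bilinear case $n=2$ via an induction exploiting the semigroup structure of $\Gamma$, and to prove the bilinear base case via a Gaussian integral representation together with H\"older's inequality and Nelson's hypercontractivity theorem.

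For the inductive reduction, I would exploit two basic identities on Hermite chaos: (i) $\Gamma(\lambda\mu)=\Gamma(\lambda)\Gamma(\mu)$ for $\lambda,\mu\in[-1,1]$, and (ii) $\Gamma(\lambda)(f\diamond g)=\Gamma(\lambda)f\diamond\Gamma(\lambda)g$; both follow immediately from $\Gamma(\lambda)h_k=\lambda^k h_k$ and $h_j\diamond h_k=h_{j+k}$. Setting $\beta_i := \alpha_i/\sqrt{1-\alpha_n^2}$ for $i<n$, so that $\sum_{i<n}\beta_i^2 = 1$, these identities yield the decomposition
\begin{equation*}
\Gamma(\alpha_1)f_1\diamond\cdots\diamond\Gamma(\alpha_n)f_n \;=\; \Gamma\!\bigl(\sqrt{1-\alpha_n^2}\,\bigr)\bigl(\Gamma(\beta_1)f_1\diamond\cdots\diamond\Gamma(\beta_{n-1})f_{n-1}\bigr)\diamond\Gamma(\alpha_n)f_n.
\end{equation*}
Applying the bilinear inequality to the outer Wick product with a well-chosen intermediate exponent $s$ for the first factor, and then the $(n-1)$-ary inductive hypothesis on the inner Wick product (with scalings $\beta_i$ and exponent $s$), reduces the balance condition for the $n$-factor inequality to the inductive one: a direct algebraic check confirms that the exponents thread consistently precisely because of the hypothesis $\sum_i\alpha_i^2/(p_i-1) = 1/(r-1)$.

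For the bilinear base case, I would use the integral representation
\begin{equation*}
(\Gamma(\alpha)f\diamond\Gamma(\beta)g)(y) \;=\; \int_{\mathbb{R}}f(\alpha y-\beta z)\,g(\beta y+\alpha z)\,d\mu(z),
\end{equation*}
obtained from the rotation of $\mathbb{R}^2$ taking $(\alpha,\beta)^T$ to $(1,0)^T$ (and verified on Hermite tensor products before being extended by density). The inequality $\|\Gamma(\alpha)f\diamond\Gamma(\beta)g\|_r \leq \|f\|_p\|g\|_q$ under the condition $\alpha^2/(p-1)+\beta^2/(q-1)=1/(r-1)$ then follows by applying H\"older's inequality (first in $z$ with some exponent $a$, then in $y$), recognising the resulting integrals $\int|f(\alpha y-\beta z)|^a\,d\mu(z)$ as $\Gamma(\alpha)|f|^a(y)$ via Mehler's formula (since $\alpha^2+\beta^2=1$), and finally invoking Nelson's hypercontractivity on these terms with parameters dictated by the balance condition.

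The main technical obstacle is threading the exponents consistently: the intermediate H\"older exponent $a$ in the bilinear argument and the intermediate exponent $s$ in the induction must be chosen so that both Nelson's constraint and the overall balance condition hold simultaneously with constant one. Endpoint cases where some exponent equals $1$ or $+\infty$, or where $|\alpha_n|=1$ so that the inductive decomposition degenerates, are handled by continuity and limit arguments using the norm-continuity of $\Gamma(\lambda)$ in $\lambda$ on each $\mathcal{L}^p(\mathbb{R},\mathcal{B}(\mathbb{R}),\mu)$.
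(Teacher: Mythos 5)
The paper does not prove this theorem itself; it defers to \cite{DLS} (with the $\mathcal{L}^1$ case credited to \cite{LS-}), and your proposal reconstructs essentially the proof given in that reference: the rotation/integral representation of $\Gamma(\alpha)f\diamond\Gamma(\beta)g$ for $\alpha^2+\beta^2=1$, H\"older's inequality in each variable, Nelson's hypercontractivity to absorb the second quantization operators, and an inductive reduction to the bilinear case via $\Gamma(\lambda)\Gamma(\rho)=\Gamma(\lambda\rho)$ and the functorial property. The strategy is correct, and the exponent bookkeeping you describe does close with constant one under the stated balance condition.
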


\noindent We are now ready for the main result of the present section. The theorem is stated and proved in the one dimensional Gaussian framework introduced above; however, its validity can be extended without additional assumptions to infinite dimensional abstract Wiener spaces (see \cite{LS2015}).
\begin{theorem}\label{main theorem}
Let $\{X_n\}_{n\geq 1}$ be a sequence of real valued, independent and identically distributed  random variables with zero mean and unit variance. Suppose that the common law of the $X_n$'s is absolutely continuous with respect to the measure $\mu$ with a density belonging to $\mathcal{L}^2(\mathbb{R},\mu)$. Then, for any non negative sequence $\{b_n\}_{n\geq 1}$ such that
\begin{eqnarray}\label{sequence}
\quad \lim_{n\to +\infty}\frac{b_n}{n}=0\quad\mbox{ and }\quad\lim_{n\to +\infty}\frac{b_n}{n^{\frac{2}{3}}}=+\infty,
\end{eqnarray} 
the density (with respect to $\mu$) of  
\begin{eqnarray}\label{limit}
\sqrt{\frac{n}{n+b_n}}\cdot\frac{X_1+\cdot\cdot\cdot+X_n}{\sqrt{n}}+\sqrt{\frac{b_n}{n+b_n}}\cdot Z
\end{eqnarray}
converges in $\mathcal{L}^1(\mathbb{R},\mathcal{B}(\mathbb{R}),\mu)$ to $1$ as $n$ tends to infinity. Here $Z$ is a standard Gaussian random variable which is independent of the sequence $\{X_n\}_{n\geq 1}$. 
\end{theorem}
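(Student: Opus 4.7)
The plan is to identify the density in (\ref{limit}) with an iterated Wick power by means of Theorem~\ref{gaussian wick interpretation}, to exploit the moment conditions through the Hermite expansion of $f := d\mu_{X_1}/d\mu$, and to bound its $\mathcal{L}^1$ distance from the constant function $1$ via the Gaussian Young inequality of Theorem~\ref{gaussian young inequality}.

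Concretely, setting $\alpha_i := 1/\sqrt{n+b_n}$ for $i=1,\ldots,n$ and $\alpha_{n+1} := \sqrt{b_n/(n+b_n)}$ (so that $\alpha_1^2+\cdots+\alpha_{n+1}^2=1$), I would apply Theorem~\ref{gaussian wick interpretation} to $X_1,\ldots,X_n, Z$; using $d\mu_Z/d\mu = 1$ and $\Gamma(\alpha_{n+1})1=1$, the density in (\ref{limit}) coincides with
$$F_n := \bigl(\Gamma(\lambda_n)f\bigr)^{\diamond n}, \qquad \lambda_n := 1/\sqrt{n+b_n}.$$
In the expansion $f = \sum_{j\ge 0}\gamma_j h_j$, the normalization $\int f\,d\mu = 1$ yields $\gamma_0 = 1$, while the assumptions $E[X_1]=0$ and $Var(X_1)=1$ yield $\gamma_1 = E[X_1] = 0$ and $2\gamma_2 = E[X_1^2-1] = 0$. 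Hence $g := f-1 = \sum_{j\ge 3}\gamma_j h_j$ lies in the Hermite chaos of degree at least three. Writing $\Gamma(\lambda_n)f = 1 + u_n$ with $u_n := \Gamma(\lambda_n)g$ and using that $1$ is the Wick identity, the binomial formula yields
$$F_n - 1 = \sum_{k=1}^n \binom{n}{k}\,u_n^{\diamond k}.$$

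The main analytic step is to bound $\|u_n^{\diamond k}\|_1$ by Theorem~\ref{gaussian young inequality}, applied to the $k$ factors $\Gamma(\lambda_n) g$ padded with the trivial factor $\Gamma(\sqrt{1-k\lambda_n^2})1=1$ (legitimate as long as $k\lambda_n^2 \le 1$). Combining Young's estimate with the fact that $g$ lives in chaos of degree $\ge 3$---so that each factor $\Gamma(\lambda_n)g$ carries an intrinsic contraction $\lambda_n^3$ beyond what Theorem~\ref{gaussian young inequality} produces on its own---the aim is a bound of the form
$$\|u_n^{\diamond k}\|_1 \;\le\; C^k \lambda_n^{3k}$$
with $C$ independent of $k$ and $n$. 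Inserting this into the Wick-binomial expansion above gives $\|F_n-1\|_1 \le (1+C\lambda_n^3)^n - 1$, which vanishes as soon as $n\,\lambda_n^3 = n/(n+b_n)^{3/2} \to 0$; the conditions (\ref{sequence}) are precisely calibrated to guarantee this while simultaneously keeping the smoothing contribution of $Z$ asymptotically negligible.

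The hard part is producing the extra $\lambda_n^{3k}$ factor in the Young-type bound: a naive application of Theorem~\ref{gaussian young inequality} only yields $\|u_n^{\diamond k}\|_1 \le \|g\|_p^k$ with $p$ close to $1$, a bound bounded away from zero that would make the geometric summation diverge. The gain must be extracted by coupling the Young exponents with the Hermite chaos structure of $u_n$---for instance by exploiting the representation $u_n^{\diamond k} = \Gamma(\lambda_n)(g^{\diamond k})$ together with a hypercontractive control of $\Gamma(\lambda_n)$ in $\mathcal{L}^p(\mu)$, or by absorbing the $\lambda_n^3$ per factor before invoking Young so that the implicit constant $C$ stays uniform in $k$.
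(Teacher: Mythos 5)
Your setup coincides with the paper's: identifying the density of (\ref{limit}) with $\big(\Gamma(1/\sqrt{n+b_n})f\big)^{\diamond n}$ via Theorem \ref{gaussian wick interpretation} and the functorial properties of $\Gamma$, and translating the moment hypotheses into $f=1+\sum_{k\geq 3}\gamma_k h_k$, are exactly the paper's first steps. The divergence --- and the gap --- is in the decomposition of $F_n-1$. Your Wick-binomial expansion $F_n-1=\sum_{k=1}^n\binom{n}{k}u_n^{\diamond k}$ is algebraically correct, but it forces you to control exponentially many terms, and the estimate you need to make the sum collapse, namely $\Vert u_n^{\diamond k}\Vert_1\leq C^k\lambda_n^{3k}$ with $C$ uniform in $k$ and $n$, is not provided by Theorem \ref{gaussian young inequality} (which yields only $\Vert u_n^{\diamond k}\Vert_1\leq\Vert g\Vert_{p_1}\cdots\Vert g\Vert_{p_k}$, with no power of $\lambda_n$ on the right) nor by any other tool in the paper; you acknowledge this yourself. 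Extracting a factor $\lambda_n^{3}$ per Wick factor is genuinely delicate because $g^{\diamond k}$ need not belong to $\mathcal{L}^2(\mu)$, so the natural route $u_n^{\diamond k}=\Gamma(\lambda_n)\big(g^{\diamond k}\big)$ followed by the $\mathcal{L}^2$ contraction on chaos of degree $\geq 3k$ breaks down. A further warning sign: if your claimed bound held, then $\Vert F_n-1\Vert_1\leq(1+C\lambda_n^3)^n-1\leq e^{Cn(n+b_n)^{-3/2}}-1\to 0$ for \emph{every} non negative sequence $b_n$, including $b_n\equiv 0$, so the hypotheses (\ref{sequence}) would play no role; this strongly suggests the bound is not attainable by these methods.

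The missing idea is the telescoping decomposition
\begin{eqnarray*}
\Gamma\Big(\frac{1}{\sqrt{n+b_n}}\Big)f^{\diamond n}-1=\sum_{j=1}^n\Gamma\Big(\frac{1}{\sqrt{n+b_n}}\Big)f^{\diamond j-1}\diamond\Big(\Gamma\Big(\frac{1}{\sqrt{n+b_n}}\Big)f-1\Big),
\end{eqnarray*}
which produces only $n$ terms, each containing exactly one copy of the small factor. Applying Theorem \ref{gaussian young inequality} to each term with the split $\alpha_1=\sqrt{(n-1)/(n+b_n)}$ and $\alpha_2=\sqrt{(b_n+1)/(n+b_n)}$ bounds it by $\Vert f\Vert_1^{j-1}\cdot\Vert\Gamma(1/\sqrt{b_n+1})f-1\Vert_1$; the first factor equals one because $f$ is a density, and the second is dominated by the $\mathcal{L}^2$ norm, where the chaos-degree-$\geq 3$ structure finally pays off in the single clean estimate $\Vert\Gamma(1/\sqrt{b_n+1})f-1\Vert_2\leq(b_n+1)^{-3/2}\Vert f-1\Vert_2$. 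The resulting bound $n(b_n+1)^{-3/2}\Vert f-1\Vert_2$ is exactly where the condition $b_n/n^{2/3}\to+\infty$ enters. In short: keep your first two steps, but replace the binomial expansion by the telescoping sum, which localizes the smallness in one factor per term instead of asking for uniform geometric decay across Wick powers.
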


\begin{remark}\label{infinite dimensions}
The classic local limit theorem is about the convergence in a specified topology of the density of $\frac{X_1+\cdot\cdot\cdot+X_n}{\sqrt{n}}$ towards the standard Gaussian density. We are working in a space with a Gaussian reference measure and hence we are interested in the convergence to the constant function $1$. We prove the convergence of the density of the random variable in (\ref{limit}) which is a smoothed version of $\frac{X_1+\cdot\cdot\cdot+X_n}{\sqrt{n}}$. Nevertheless, the validity of our approach is dimension independent as mentioned above.
\end{remark}

\begin{remark}\label{assumptions}
Theorem \ref{main theorem} is a refinement of the result proved in \cite{LS2015}. In fact, in that paper it is assumed that $b_n=\alpha n$ for some $\alpha>0$; with this choice (that does not satisfy the first condition in (\ref{sequence})) the extra term containing $Z$ in (\ref{limit}) is not vanishing as $n$ tends to infinity. Since the presence of $Z$ in (\ref{limit}) serves to mollify the resulting density, we believe that Theorem \ref{main theorem} improves the result obtained in the paper \cite{LS2015}. 
\end{remark}

\begin{proof}
Let $f$ be the common density of the $X_n$'s with respect to the measure $\mu$. From Theorem \ref{gaussian wick interpretation} we know that the density of $\frac{X_1+\cdot\cdot\cdot+X_n}{\sqrt{n}}$ is given by
\begin{eqnarray*}
\Gamma\Big(\frac{1}{\sqrt{n}}\Big)f\diamond\cdot\cdot\cdot\diamond\Gamma\Big(\frac{1}{\sqrt{n}}\Big)f=\Big(\Gamma\Big(\frac{1}{\sqrt{n}}\Big)f\Big)^{\diamond n}
\end{eqnarray*}
where $g^{\diamond n}$ means $g\diamond\cdot\cdot\cdot\diamond g$ ($n$-times); moreover, the density of the random variable in (\ref{limit}) can be written as
\begin{eqnarray*}
\Gamma\Big(\sqrt{\frac{n}{n+b_n}}\Big)\Big(\Gamma\Big(\frac{1}{\sqrt{n}}\Big)f\Big)^{\diamond n}\diamond\Gamma\Big(\sqrt{\frac{b_n}{n+b_n}}\Big)1=\Big(\Gamma\Big(\frac{1}{\sqrt{n+b_n}}\Big)f\Big)^{\diamond n}.
\end{eqnarray*}
Here we utilized the functorial property $\Gamma(\lambda)(f\diamond g)=(\Gamma(\lambda)g) \diamond(\Gamma(\lambda)g)$ and the identity $\Gamma(\lambda)1=1$ (note that the density of $Z$ with respect to $\mu$ is one). Observe in addition that we can write without ambiguity the right hand side of the previous equation as 
$\Gamma\Big(\frac{1}{\sqrt{n+b_n}}\Big)f^{\diamond n}$ (again as a consequence of the interplay between second quantization operators and Wick product).\\ 
Observe in addition that assuming the $X_n$'s to be with mean zero and unit variance is equivalent to require that $f$ is of the form $f=1+\sum_{k\geq 3}\gamma_kh_k$; in fact
\begin{eqnarray}\label{moments}
0&=&E[X_n]=\int_{\mathbb{R}}xf(x)d\mu(x)=\int_{\mathbb{R}}h_1(x)f(x)d\mu(x)=\gamma_1\\
1&=&Var(X_n)=\int_{\mathbb{R}}x^2f(x)d\mu(x)=\int_{\mathbb{R}}h_2(x)f(x)d\mu(x)+1=2\gamma_2+1
\end{eqnarray}
(recall that $h_1(x)=x$ and $h_2(x)=x^2-1$). Our aim is to prove that
\begin{eqnarray*}
\lim_{n\to +\infty}\Big\Vert \Gamma\Big(\frac{1}{\sqrt{n+b_n}}\Big)f^{\diamond n}-1\Big\Vert_1=0.
\end{eqnarray*}
First of all, exploiting the associativity and distributivity of the Wick product we write 
\begin{eqnarray*}
\Gamma\Big(\frac{1}{\sqrt{n+b_n}}\Big)f^{\diamond
n}-1&=&\sum_{j=1}^n\Gamma\Big(\frac{1}{\sqrt{n+b_n}}\Big)f^{\diamond
j}-\Gamma\Big(\frac{1}{\sqrt{n+b_n}}\Big)f^{\diamond j-1}\\
&=&\sum_{j=1}^n\Gamma\Big(\frac{1}{\sqrt{n+b_n}}\Big)f^{\diamond
j-1}\diamond\Big(\Gamma\Big(\frac{1}{\sqrt{n+b_n}}\Big)f-1\Big).
\end{eqnarray*}
Now take the $\mathcal{L}^1(\mathbb{R},\mathcal{B}(\mathbb{R}),\mu)$-norm and use the triangle
inequality:
\begin{eqnarray*}
\Big\Vert\Gamma\Big(\frac{1}{\sqrt{n+b_n}}\Big)f^{\diamond
n}-1\Big\Vert_1&=&\Big\Vert\sum_{j=1}^n\Gamma\Big(\frac{1}{\sqrt{n+b_n}}\Big)f^{\diamond
j-1}\diamond\Big(\Gamma\Big(\frac{1}{\sqrt{n+b_n}}\Big)f-1\Big)\Big\Vert_1\\
&\leq&\sum_{j=1}^n\Big\Vert\Gamma\Big(\frac{1}{\sqrt{n+b_n}}\Big)f^{\diamond
j-1}\diamond\Big(\Gamma\Big(\frac{1}{\sqrt{n+b_n}}\Big)f-1\Big)\Big\Vert_1.
\end{eqnarray*}
Now apply Theorem \ref{gaussian young inequality} (actually we need only the $\mathcal{L}^1$-form of the inequality which was proven
before in the paper \cite{LS-}) with
\begin{eqnarray*}
\alpha_1=\sqrt{\frac{n-1}{n+b_n}},\quad\mbox{ and }\quad\alpha_2=\sqrt{\frac{b_n+1}{n+b_n}}
\end{eqnarray*}
to get
\begin{eqnarray*}
\Big\Vert\Gamma\Big(\frac{1}{\sqrt{n+b_n}}\Big)f^{\diamond
n}-1\Big\Vert_1&\leq&\sum_{j=1}^n\Big\Vert\Gamma\Big(\frac{1}{\sqrt{n+b_n}}\Big)f^{\diamond
j-1}\diamond\Big(\Gamma\Big(\frac{1}{\sqrt{n+b_n}}\Big)f-1\Big)\Big\Vert_1\\
&\leq&\sum_{j=1}^n\Big\Vert\Gamma\Big(\frac{1}{\sqrt{n-1}}\Big)f^{\diamond
j-1}\Big\Vert_1\cdot\Big\Vert\Gamma\Big(\frac{1}{\sqrt{b_n+1}}\Big)f-1\Big\Vert_1\\
&=&\Big\Vert\Gamma\Big(\frac{1}{\sqrt{b_n+1}}\Big)f-1\Big\Vert_1\cdot\sum_{j=1}^n\Big\Vert\Gamma\Big(\frac{1}{\sqrt{n-1}}\Big)f^{\diamond
j-1}\Big\Vert_1.
\end{eqnarray*}
Observe that employing once again inequality (\ref{gaussian young}) we can bound the last sum as
\begin{eqnarray*}
\sum_{j=1}^n\Big\Vert\Gamma\Big(\frac{1}{\sqrt{n-1}}\Big)f^{\diamond
j-1}\Big\Vert_1&\leq&
\sum_{j=1}^n\Big\Vert\Gamma\Big(\frac{\sqrt{j-1}}{\sqrt{n-1}}\Big)f\Big\Vert_1^{
j-1}\\
&\leq&\sum_{j=1}^n\Vert f\Vert_1^{
j-1}\\
&=& n.
\end{eqnarray*}
Here we are using the fact that $f$ is a density function (in particular is non negative and with integral with respect to $\mu$ equal to one). Therefore
\begin{eqnarray*}
\Big\Vert\Gamma\Big(\frac{1}{\sqrt{n+b_n}}\Big)f^{\diamond
n}-1\Big\Vert_1&\leq&\Big\Vert\Gamma\Big(\frac{1}{\sqrt{b_n+1}}\Big)f-1\Big\Vert_1\cdot\sum_{j=1}^n\Big\Vert\Gamma\Big(\frac{1}{\sqrt{n}}\Big)f^{\diamond
j-1}\Big\Vert_1\nonumber\\
&\leq& n\cdot\Big\Vert\Gamma\Big(\frac{1}{\sqrt{b_n+1}}\Big)f-1\Big\Vert_1\\
&\leq& n\cdot\Big\Vert\Gamma\Big(\frac{1}{\sqrt{b_n+1}}\Big)f-1\Big\Vert_2\\
&=&n\Big(\sum_{k\geq 3}k!\Big(\frac{1}{b_n+1}\Big)^k|\gamma_k|^2\Big)^{\frac{1}{2}},
\end{eqnarray*}
where the $\gamma_k$'s are the components in the 
decomposition of $f$ (recall (\ref{moments})).  Hence,
\begin{eqnarray*}
\Big\Vert\Gamma\Big(\frac{1}{\sqrt{n+b_n}}\Big)f^{\diamond
n}-1\Big\Vert_1&\leq&
n\cdot\Big\Vert\Gamma\Big(\frac{1}{\sqrt{b_n+1}}\Big)f-1\Big\Vert_1 \\
&\leq&n\cdot\Big(\sum_{k\geq 3}k!\Big(\frac{1}{b_n+1}\Big)^k|\gamma_k|^2\Big)^{\frac{1}{2}} \\ 
&\leq&n\Big(\frac{1}{b_n+1}\Big)^{\frac{3}{2}}\Big(\sum_{k\geq
3}k!|\gamma_k|^2\Big)^{\frac{1}{2}}.
\end{eqnarray*}
The last series, being equal to $\Vert f\Vert_2^2-1$, is convergent; we can therefore pass to the limit as $n$ tends to infinity and obtain the desired result. 
\end{proof}

\begin{remark}
The assumption on $f$ to be in $\mathcal{L}^2(\mathbb{R},\mathcal{B}(\mathbb{R}),\mu)$ can be relaxed, using the Nelson's hyper-contractive estimate, to $f\in\mathcal{L}^p(\mathbb{R},\mathcal{B}(\mathbb{R}),\mu)$ for some $p>1$. 
\end{remark}

\section{Law of small numbers}

Let $\nu$ denote the Poisson distribution with intensity $a>0$, i.e.
\begin{eqnarray*}
\nu(\{k\}):=\frac{a^k}{k!}e^{-a},\quad k\in\mathbb{N}_0
\end{eqnarray*}
and consider the real Hilbert space $\mathcal{L}^2(\mathbb{N}_0, 2^{\mathbb{N}_0},\nu)$ together with the family of monic Charlier polynomials $\{c^a_n\}_{n\geq 0}$. These polynomials constitute an orthogonal basis for the space $\mathcal{L}^2(\mathbb{N}_0, 2^{\mathbb{N}_0},\nu)$ and  therefore any element $f$ of that space can be represented as
\begin{eqnarray*}
f(x)=\sum_{j\geq 0}\gamma_jc^a_j(x).
\end{eqnarray*}
Note that 
\begin{eqnarray*}
\sum_{k\geq 0}f^2(k)\nu(\{k\})=\sum_{j\geq 0}a^jj!|\gamma_j|^2,
\end{eqnarray*}
since the polynomials are not normalized. As before, we introduce for $\lambda\in [-1,1]$ the \emph{second quantization operator} as
\begin{eqnarray*}
\Gamma(\lambda):\mathcal{L}^2(\mathbb{N}_0, 2^{\mathbb{N}_0},\nu)&\to& \mathcal{L}^2(\mathbb{N}_0, 2^{\mathbb{N}_0},\nu)\\
\sum_{j\geq 0}\gamma_jc^a_j(x)&\mapsto& \sum_{j\geq 0}\lambda^j\gamma_jc^a_j(x).
\end{eqnarray*}
(We use the same symbol as for its Gaussian analogue since we believe that there will be no danger of confusion). This operator can be extended to a bounded linear operator from $\mathcal{L}^p(\mathbb{N}_0, 2^{\mathbb{N}_0},\nu)$ to $\mathcal{L}^p(\mathbb{N}_0, 2^{\mathbb{N}_0},\nu)$ for any $p\in[1,+\infty]$ with norm equal to one. Moreover, for $f,g\in\mathcal{L}^2(\mathbb{N}_0, 2^{\mathbb{N}_0},\nu)$ with
\begin{eqnarray*}
f(x)=\sum_{j\geq 0}\gamma_jc^a_j(x)\quad\mbox{ and }\quad g(x)=\sum_{j\geq 0}\delta_jc^a_j(x),
\end{eqnarray*}
we define the \emph{Wick product} of $f$ and $g$ to be
\begin{eqnarray*}
(f\diamond g)(x)=\sum_{j\geq 0}\Big(\sum_{i=0}^j\gamma_i\delta_{j-i}\Big)c^a_j(x).
\end{eqnarray*}

\noindent The next definition provides to some extent a discrete analogue of the scaling operation. It was introduced by
R\'enyi in \cite{renyi}. 
\begin{definition}\label{thinning}
Let $X$ be a random variable taking values on $\mathbb{N}_0$ and  denote by $\nu_X$ its law. For any
$\alpha\in [0,1]$ we define the \emph{$\alpha$-thinning} of $\nu_X$ to be the new probability measure on $\mathbb{N}_0$ given by
\begin{eqnarray*}
(T_{\alpha}\nu_{X})(\{k\}) & := & \sum_{n\geq k}{n \choose
k}\alpha^k(1-\alpha)^{n-k}\nu_X(\{n\}),\quad k\in\mathbb{N}_0.
\end{eqnarray*}
With a slight abuse of notation we will write $T_{\alpha}X$ for the $\alpha$-thinning of the law of $X$. Observe that the law of the random variable $T_{\alpha}X$  coincides with the one of
$Y_1 + \cdots + Y_X$, where the
$Y_i$'s are Bernoulli random variables with probability of success
$\alpha$ which are independent of each other and of $X$.  
\end{definition} 
The next two theorems show an interesting connection between $\alpha$-thinning and stochastic independence on one side and Wick product and second quantization operators on the other. They were obtained in \cite{LS} (see also \cite{LSportelli2011}) where the reader is referred for their proofs. Theorem \ref{poisson wick interpretation} and Theorem \ref{poisson young inequality} below represent the Poissonian counterpart of Theorem \ref{gaussian wick interpretation} and Theorem \ref{gaussian young inequality}, respectively. Here and in the sequel, the symbol $\Vert\cdot\Vert_p$ will denote the usual norm in $\mathcal{L}^{p}(\mathbb{N}_0, 2^{\mathbb{N}_0},\nu)$.

\begin{theorem}\label{poisson wick interpretation}
Let $X_1,...,X_n$ be independent random variables taking values on $\mathbb{N}_0$ and denote by $\nu_{X_1},...,\nu_{X_n}$ the corresponding laws, respectively.
Then, for any $\alpha_1,...,\alpha_n\in [0,1]$ such that
\begin{eqnarray*}
\alpha_1+\cdot\cdot\cdot+\alpha_n=1,
\end{eqnarray*}
we have
\begin{eqnarray}\label{poisson wick}
\Gamma(\alpha_1)\frac{d\nu_{X_1}}{d\nu}\diamond\cdot\cdot\cdot\diamond\Gamma(\alpha_n)\frac{d\nu_{X_n}}{d\nu}&=&\frac{d\nu_{T_{\alpha_1}X_1+\cdot\cdot\cdot+T_{\alpha_n}X_n}}{d\nu},
\end{eqnarray}
where $\frac{dQ}{d\nu}$ denotes the Radon--Nikodym derivative of the
measure $Q$ with respect to the reference measure $\nu$ and $\nu_{T_{\alpha_1}X_1+\cdot\cdot\cdot+T_{\alpha_n}X_n}$ stands for the law of the random variable $T_{\alpha_1}X_1+\cdot\cdot\cdot+T_{\alpha_n}X_n$.
\end{theorem}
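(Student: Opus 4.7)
The plan is to verify (\ref{poisson wick}) by comparing the generating series of the Charlier coefficients on both sides, exploiting the generating function
\[
\sum_{j\geq 0}\frac{t^j}{j!}\,c_j^a(x)=e^{-at}(1+t)^x
\]
of the monic Charlier polynomials. Taken in expectation against the law of any $\mathbb{N}_0$-valued variable $Y$, this yields the master identity
\[
\sum_{j\geq 0}\frac{t^j}{j!}\,E[c_j^a(Y)]=e^{-at}\,\phi_Y(1+t),
\]
where $\phi_Y(u)=E[u^Y]$ is the probability generating function.

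First I would translate the three operations involved into their action on Charlier coefficients. For $f\in\mathcal{L}^2(\nu)$ with expansion $f=\sum_j\gamma_j c_j^a$, introduce the formal generating series $G_f(s):=\sum_j\gamma_j s^j$. Orthogonality of $\{c_j^a\}$ combined with the master identity gives $G_{d\nu_Y/d\nu}(s)=e^{-s}\phi_Y(1+s/a)$; the definition of $\Gamma(\alpha)$ (multiplication of the $j$-th coefficient by $\alpha^j$) gives $G_{\Gamma(\alpha)f}(s)=G_f(\alpha s)$; and the Wick product, being the Cauchy convolution of coefficient sequences, satisfies $G_{f\diamond g}(s)=G_f(s)G_g(s)$.

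Applying these three rules, the generating series of the Charlier coefficients on the left-hand side of (\ref{poisson wick}) equals
\[
\prod_{i=1}^n e^{-\alpha_i s}\,\phi_{X_i}(1+\alpha_i s/a)=e^{-s}\prod_{i=1}^n\phi_{X_i}(1+\alpha_i s/a),
\]
where the constraint $\alpha_1+\cdots+\alpha_n=1$ collapses the product of exponentials. On the right-hand side, independence of the $X_i$'s and of the auxiliary Bernoulli variables realising each thinning yields $\phi_{\sum_i T_{\alpha_i}X_i}=\prod_i\phi_{T_{\alpha_i}X_i}$, and the thinning identity $\phi_{T_\alpha X}(u)=\phi_X(1-\alpha+\alpha u)$ evaluated at $u=1+s/a$ gives $\phi_{T_{\alpha_i}X_i}(1+s/a)=\phi_{X_i}(1+\alpha_i s/a)$. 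The two generating series therefore coincide, and extracting coefficients of $s^j$ proves the theorem.

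The main obstacle I expect is analytic rather than algebraic: the series $\phi_{X_i}(1+s/a)$ need not converge for any positive $s$, so the chain of equalities above must be read as an identity of formal power series in $s$. This is legitimate because each Charlier coefficient is a finite polynomial expectation in the $X_i$'s (automatic once one ensures that all moments exist, for instance under the mild hypothesis $d\nu_{X_i}/d\nu\in\mathcal{L}^p(\nu)$ for some $p>1$), so equality of formal series implies equality of each coefficient and hence the pointwise identity (\ref{poisson wick}). A noteworthy subtlety to highlight along the way is that the single-variable relation $\Gamma(\alpha)(d\nu_X/d\nu)=d\nu_{T_\alpha X}/d\nu$ is \emph{false} when $\alpha<1$ (the exponential prefactors do not match), so the identity is intrinsically multi-variable and relies essentially on the constraint $\sum_i\alpha_i=1$.
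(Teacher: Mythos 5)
The paper itself gives no proof of this statement: it is imported from \cite{LS} (see also \cite{LSportelli2011}), so there is no in-text argument to measure yours against. Judged on its own terms, your generating-function proof is correct and is the natural one. The three translation rules all hold: from $\gamma_j=\frac{1}{a^j j!}E[c_j^a(Y)]$ and the Charlier generating function one gets $G_{d\nu_Y/d\nu}(s)=e^{-s}\phi_Y(1+s/a)$, the operator $\Gamma(\alpha)$ rescales the argument of $G$, and the Wick product (a Cauchy convolution of coefficients) multiplies the $G$'s; combined with $\phi_{T_\alpha X}(u)=\phi_X(1-\alpha+\alpha u)$ and independence, both sides of (\ref{poisson wick}) reduce to $e^{-s}\prod_{i}\phi_{X_i}(1+\alpha_i s/a)$, the hypothesis $\alpha_1+\cdots+\alpha_n=1$ being exactly what reconciles the exponential prefactors. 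Your closing observation that the one-variable identity $\Gamma(\alpha)\frac{d\nu_X}{d\nu}=\frac{d\nu_{T_\alpha X}}{d\nu}$ is false for $\alpha<1$ is accurate and worth keeping; the paper never makes it explicit. Two points deserve one more line each. First, the formal-power-series reading is cleanest if phrased coefficientwise throughout: the $j$-th coefficient of $\phi_Y(1+t)$ is the factorial moment $E\binom{Y}{j}$, thinning multiplies factorial moments by $\alpha^j$, and Vandermonde's identity together with independence (all terms nonnegative, so no integrability issue) factorizes the coefficients of the sum; this avoids ever writing a possibly divergent series. Second, equality of all Charlier coefficients gives equality of the two functions on $\mathbb{N}_0$ only once you know the density of $T_{\alpha_1}X_1+\cdots+T_{\alpha_n}X_n$ is determined by its moments (equivalently, by its Charlier coefficients); under the standing assumption $\frac{d\nu_{X_i}}{d\nu}\in\mathcal{L}^2(\nu)$ used elsewhere in the paper this follows, since $|E[c_j^a(X_i)]|\leq (a^j j!)^{1/2}\Vert f_i\Vert_2$ forces moment growth slow enough for Carleman's condition, but the step should be stated rather than left implicit.
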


\begin{theorem}\label{poisson young inequality}
Let $\alpha_1,...,\alpha_n\in [0,1]$ be such that $\alpha_1+\cdot\cdot\cdot+\alpha_n = 1$ and let
$p \in [1$, $+\infty]$. If $f_1,...,f_n\in\mathcal{L}^p(\mathbb{N}_0, 2^{\mathbb{N}_0},\nu)$, then $\Gamma(\alpha_1)f_1\diamond
\cdot\cdot\cdot\diamond\Gamma(\alpha_n)f_n\in \mathcal{L}^p(\mathbb{N}_0, 2^{\mathbb{N}_0},\nu)$. More precisely,
\begin{eqnarray}\label{poisson young}
\parallel\Gamma(\alpha_1)f_1\diamond\cdot\cdot\cdot\diamond\Gamma(\alpha_n)g_n\parallel_p & \leq &
\parallel f_1\parallel_p\cdot\cdot\cdot\parallel f_n\parallel_p.
\end{eqnarray}
\end{theorem}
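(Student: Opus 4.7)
The plan is to first reduce from general $n$ to the bilinear case $n=2$ by an induction that exploits the semigroup property $\Gamma(\alpha)\Gamma(\beta)=\Gamma(\alpha\beta)$ and the functoriality $\Gamma(\lambda)(f\diamond g)=\Gamma(\lambda)f\diamond\Gamma(\lambda)g$. Setting $\beta:=\alpha_2+\cdots+\alpha_n$ and writing
\[
\Gamma(\alpha_2)f_2\diamond\cdots\diamond\Gamma(\alpha_n)f_n=\Gamma(\beta)\Big(\Gamma\big(\tfrac{\alpha_2}{\beta}\big)f_2\diamond\cdots\diamond\Gamma\big(\tfrac{\alpha_n}{\beta}\big)f_n\Big),
\]
the new convex weights $\alpha_i/\beta$ again sum to one; since $\Gamma(\beta)$ is a contraction on $\mathcal{L}^p(\nu)$, the general statement follows by induction from the base case $\|\Gamma(\alpha)f\diamond\Gamma(1-\alpha)g\|_p\leq\|f\|_p\|g\|_p$.

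The base case I would attack via a coupling-based integral representation. When $f,g$ are $\nu$-densities of random variables $X,Y$, Theorem \ref{poisson wick interpretation} identifies $\Gamma(\alpha)f\diamond\Gamma(1-\alpha)g$ as the $\nu$-density of $T_\alpha X+T_{1-\alpha}Y$. Writing each thinning as an independent Bernoulli sum and invoking Poisson splitting, a direct computation should yield the pointwise formula
\[
(\Gamma(\alpha)f\diamond\Gamma(1-\alpha)g)(y)=E\big[f(Y_1+M)\,g(y-Y_1+N)\big],
\]
where $Y_1\sim\mathrm{Bin}(y,\alpha)$, $M\sim\mathrm{Poisson}((1-\alpha)a)$, $N\sim\mathrm{Poisson}(\alpha a)$ are mutually independent. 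Since both sides are bilinear in $(f,g)$ and every element of $\mathcal{L}^p(\nu)$ is a linear combination of nonnegative functions (each a scalar multiple of a density), the identity extends to arbitrary $f,g\in\mathcal{L}^p(\nu)$.

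The inequality then falls out. Conditional Jensen applied to the representation gives
\[
\big|(\Gamma(\alpha)f\diamond\Gamma(1-\alpha)g)(y)\big|^p\leq E\big[|f(Y_1+M)|^p\,|g(y-Y_1+N)|^p\big];
\]
integrating against $\nu$ promotes the deterministic $y$ to a variable $Y\sim\nu$, and the Poisson splitting theorem then tells us that $Y_1$ and $Y-Y_1$ are \emph{independent} Poisson random variables with intensities $\alpha a$ and $(1-\alpha)a$, respectively. Consequently $U:=Y_1+M$ and $V:=(Y-Y_1)+N$ are independent, each with law $\nu$, so the double expectation factors:
\[
\|\Gamma(\alpha)f\diamond\Gamma(1-\alpha)g\|_p^p\leq E[|f(U)|^p]\,E[|g(V)|^p]=\|f\|_p^p\,\|g\|_p^p.
\]

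The principal obstacle I foresee is in setting up the coupling formula cleanly: one has to match the purely algebraic Cauchy product defining $\diamond$ with the probabilistic mixture produced by thinning plus Poisson splitting. Verifying the identity on the Charlier basis via the generating function $\sum_n c_n^a(y)\frac{t^n}{n!}=(1+t)^y e^{-at}$ is the most transparent route, after which bilinear extension to general $f,g$ is routine and the $\mathcal{L}^p$ bound is a one-line consequence of independence.
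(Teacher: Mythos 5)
The paper itself contains no proof of this theorem: it is imported verbatim from the cited references (\cite{LS}, see also \cite{LSportelli2011}), so there is no in-paper argument to compare against. Judged on its own, your proposal is correct and is essentially the argument one finds in those references: the pointwise representation you posit is exactly the ``Wick product as a randomized discrete convolution'' identity, and it does check out on the Charlier basis. Concretely, with the generating function $\sum_n c_n^a(y)t^n/n!=(1+t)^ye^{-at}$, independence of $Y_1\sim\mathrm{Bin}(y,\alpha)$, $M\sim\mathrm{Poisson}((1-\alpha)a)$, $N\sim\mathrm{Poisson}(\alpha a)$ gives
\[
E\big[(1+s)^{Y_1+M}e^{-as}(1+t)^{y-Y_1+N}e^{-at}\big]=(1+\alpha s+(1-\alpha)t)^y\,e^{-a(\alpha s+(1-\alpha)t)},
\]
whose $(s^it^j/i!j!)$-coefficient is $\alpha^i(1-\alpha)^jc^a_{i+j}(y)=\big(\Gamma(\alpha)c^a_i\diamond\Gamma(1-\alpha)c^a_j\big)(y)$, as required. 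Jensen's inequality plus the Poisson splitting theorem (which makes $Y_1+M$ and $(Y-Y_1)+N$ independent with common law $\nu$) then yields the bilinear bound, and your reduction of the $n$-fold case to $n=2$ via $\Gamma(\beta)\Gamma(\lambda)=\Gamma(\beta\lambda)$ and functoriality is sound. Three small points a complete write-up should not skip: (i) the passage from the identity on finite Charlier expansions to arbitrary $f,g\in\mathcal{L}^p(\nu)$ needs an explicit density or monotone-class argument (polynomials are dense in $\mathcal{L}^p(\nu)$ only for $p<\infty$; for $p=\infty$ the sup bound instead follows directly from the integral representation); (ii) the degenerate case $\beta=\alpha_2+\cdots+\alpha_n=0$ in the induction should be treated separately, where $\Gamma(0)f_i=\int f_i\,d\nu$ and the bound is immediate; and (iii) since the Wick product is an unbounded bilinear operation, the associativity manipulations should first be performed on finite expansions and then passed to the limit. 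None of these affects the substance: the proposal is a valid proof.
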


\noindent We are now able to state and prove the law of small numbers. As the reader will notice, the proof differs only for few details from the proof of Theorem \ref{main theorem}. This confirms the abstract nature of our approach. 

\begin{theorem}\label{main theorem poisson}
Let $\{X_n\}_{n\geq 1}$ be a sequence of independent and identically distributed  random variables taking values on $\mathbb{N}_0$. Suppose that the $X_n$'s have mean $a$ (the intensity of the Poisson distribution) and that $\frac{d\nu_{X_1}}{d\nu}\in\mathcal{L}^2(\mathbb{N}_0,2^{\mathbb{N}_0},\nu)$. Then, for any non negative sequence $\{b_n\}_{n\geq 1}$ such that
\begin{eqnarray}\label{sequence poisson}
\quad \lim_{n\to +\infty}\frac{b_n}{n}=0\quad\mbox{ and }\quad\lim_{n\to +\infty}\frac{b_n}{n^{\frac{1}{2}}}=+\infty,
\end{eqnarray} 
the density (with respect to $\nu$) of  
\begin{eqnarray}\label{limit poisson}
\frac{n}{n+b_n}\cdot (T_{\frac{1}{n}}X_1+\cdot\cdot\cdot+T_{\frac{1}{n}}X_n)+\frac{b_n}{n+b_n}\cdot U
\end{eqnarray}
converges in $\mathcal{L}^1(\mathbb{N}_0,2^{\mathbb{N}_0},\nu)$ to $1$ as $n$ tends to infinity. Here $U$ is a Poissonian random variable with intensity $a$ which is independent of the sequence $\{X_n\}_{n\geq 1}$. 
\end{theorem}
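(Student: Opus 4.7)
The plan is to follow the Gaussian argument of Theorem~\ref{main theorem} almost verbatim, replacing Theorems~\ref{gaussian wick interpretation} and \ref{gaussian young inequality} with their Poissonian counterparts, Theorems~\ref{poisson wick interpretation} and \ref{poisson young inequality}. Write $f:=d\nu_{X_1}/d\nu$ and expand $f=\sum_{j\geq 0}\gamma_j c_j^a$. Since $c_1^a(x)=x-a$ is the first monic Charlier polynomial, the moment condition $E[X_1]=a$ translates into $0=\int c_1^a\, f\, d\nu=a\gamma_1$, hence $\gamma_1=0$ and $f=1+\sum_{k\geq 2}\gamma_k c_k^a$; note that, unlike in the Gaussian case, only one orthogonal coefficient is forced to vanish. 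Interpreting the coefficients $n/(n+b_n)$ and $b_n/(n+b_n)$ in (\ref{limit poisson}) as thinning parameters (they lie in $(0,1)$ and sum to one), Theorem~\ref{poisson wick interpretation}, together with the functorial identities $\Gamma(\lambda)(g\diamond h)=\Gamma(\lambda)g\diamond\Gamma(\lambda)h$ and $\Gamma(\lambda)1=1$, identifies the density with respect to $\nu$ of the random variable in (\ref{limit poisson}) as $\bigl[\Gamma(1/(n+b_n))f\bigr]^{\diamond n}$.

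The crucial estimate comes from the same telescoping identity as in the Gaussian case:
\begin{eqnarray*}
\bigl[\Gamma(1/(n+b_n))f\bigr]^{\diamond n}-1=\sum_{j=1}^{n}\bigl[\Gamma(1/(n+b_n))f\bigr]^{\diamond(j-1)}\diamond\bigl(\Gamma(1/(n+b_n))f-1\bigr).
\end{eqnarray*}
After taking $\mathcal{L}^1$-norms and applying the triangle inequality, I will invoke Theorem~\ref{poisson young inequality} with $p=1$ and the two weights $\alpha_1=(n-1)/(n+b_n)$ and $\alpha_2=(b_n+1)/(n+b_n)$, which satisfy $\alpha_1+\alpha_2=1$ (the correct Poissonian balance, in contrast to $\alpha_1^2+\alpha_2^2=1$ in the Gaussian setting). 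Using the factorizations $\Gamma(1/(n+b_n))=\Gamma(\alpha_1)\Gamma(1/(n-1))=\Gamma(\alpha_2)\Gamma(1/(b_n+1))$ to decompose each summand, this yields
\begin{eqnarray*}
\Bigl\|\bigl[\Gamma(1/(n+b_n))f\bigr]^{\diamond n}-1\Bigr\|_1\leq\Bigl\|\Gamma\Bigl(\frac{1}{b_n+1}\Bigr)f-1\Bigr\|_1\cdot\sum_{j=1}^{n}\Bigl\|\Gamma\Bigl(\frac{1}{n-1}\Bigr)f^{\diamond(j-1)}\Bigr\|_1.
\end{eqnarray*}
A second application of Theorem~\ref{poisson young inequality} with $j-1$ equal weights $1/(j-1)$ summing to one, combined with the $\mathcal{L}^1$-contractivity of $\Gamma$ and $\|f\|_1=1$, bounds each inner norm by $1$, so the sum is at most $n$.

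It remains to estimate $\|\Gamma(1/(b_n+1))f-1\|_1$. Using $\|\cdot\|_1\leq\|\cdot\|_2$ (valid since $\nu$ is a probability measure) and the Charlier Parseval formula recalled at the start of Section~3,
\begin{eqnarray*}
\Bigl\|\Gamma\Bigl(\frac{1}{b_n+1}\Bigr)f-1\Bigr\|_1^{2}\leq\sum_{k\geq 2}a^{k}k!\Bigl(\frac{1}{b_n+1}\Bigr)^{2k}|\gamma_k|^{2}\leq\Bigl(\frac{1}{b_n+1}\Bigr)^{4}\bigl(\|f\|_2^{2}-1\bigr),
\end{eqnarray*}
and the last series is finite by the $\mathcal{L}^2$ assumption on $f$. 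Combining everything gives $\|[\Gamma(1/(n+b_n))f]^{\diamond n}-1\|_1\leq C\, n/(b_n+1)^{2}$, which tends to zero precisely because $b_n/n^{1/2}\to+\infty$ by (\ref{sequence poisson}). The first condition $b_n/n\to 0$ plays no role in the estimate and is imposed only so that the mollifying term $U$ becomes asymptotically negligible, in line with Remark~\ref{assumptions}. I expect the main bookkeeping obstacle to be the careful choice of factorizations of the $\Gamma$ operators so that the Poissonian Young weights (summing to $1$, rather than having their squares sum to $1$) partition correctly at every step; once this is arranged, the argument is structurally a direct transliteration of the proof of Theorem~\ref{main theorem}.
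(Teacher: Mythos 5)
Your proposal is correct and follows essentially the same route as the paper's own proof: the identification of the density as $\bigl[\Gamma(1/(n+b_n))f\bigr]^{\diamond n}$, the telescoping decomposition, the application of Theorem~\ref{poisson young inequality} with weights $\alpha_1=(n-1)/(n+b_n)$ and $\alpha_2=(b_n+1)/(n+b_n)$, the bound of the sum by $n$, and the final Parseval estimate yielding $Cn/(b_n+1)^2\to 0$ are all exactly the steps in the paper. Your side remarks (only $\gamma_1$ is forced to vanish, and the condition $b_n/n\to 0$ is not used in the estimate itself) are accurate.
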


\begin{proof}
Let $f:=\frac{d\nu_{X_1}}{d\nu}$ be the common density of the $X_n$'s with respect to the measure $\nu$. From Theorem \ref{poisson wick interpretation} we know that the density of $T_{\frac{1}{n}}X_1+\cdot\cdot\cdot+T_{\frac{1}{n}}X_n$ is given by
\begin{eqnarray*}
\Gamma\Big(\frac{1}{n}\Big)f\diamond\cdot\cdot\cdot\diamond\Gamma\Big(\frac{1}{n}\Big)f=\Big(\Gamma\Big(\frac{1}{n}\Big)f\Big)^{\diamond n}
\end{eqnarray*}
where $g^{\diamond n}$ means $g\diamond\cdot\cdot\cdot\diamond g$ ($n$-times); moreover, the density of the random variable in (\ref{limit poisson}) can be written as
\begin{eqnarray*}
\Gamma\Big(\frac{n}{n+b_n}\Big)\Big(\Gamma\Big(\frac{1}{n}\Big)f\Big)^{\diamond n}\diamond\Gamma\Big(\frac{b_n}{n+b_n}\Big)1=\Big(\Gamma\Big(\frac{1}{n+b_n}\Big)f\Big)^{\diamond n}.
\end{eqnarray*}
Here we utilized the functorial property $\Gamma(\lambda)(f\diamond g)=(\Gamma(\lambda)g) \diamond(\Gamma(\lambda)g)$ and the identity $\Gamma(\lambda)1=1$ (note that the density of $U$ with respect to $\nu$ is one). Observe in addition that we can write without ambiguity the right hand side of the previous equation as 
$\Gamma\Big(\frac{1}{n+b_n}\Big)f^{\diamond n}$ (again as a consequence of the interplay between second quantization operators and Wick product).\\
As in the Gaussian case, the assumption $E[X_n]=a$ is reflected on $f$ to be of the form $f(x)=1+\sum_{j\geq 2}\gamma_jc_j^a(x)$; in fact, since $c_1^a(x)=x-a$ we have
\begin{eqnarray}\label{poisson moments}
a&=&E[X_n]=\sum_{k\geq 0}k\nu_{X_n}(\{k\})=\sum_{k\geq 0}kf(k)\nu(\{k\})\nonumber\\
&=&\sum_{k\geq 0}c_1^a(k)f(k)\nu(\{k\})+a=a\gamma_1+a.
\end{eqnarray}  
Our aim is to prove that
\begin{eqnarray*}
\lim_{n\to +\infty}\Big\Vert \Gamma\Big(\frac{1}{n+b_n}\Big)f^{\diamond n}-1\Big\Vert_1=0.
\end{eqnarray*}
First of all, exploiting the associativity and distributivity of the Wick product we write 
\begin{eqnarray*}
\Gamma\Big(\frac{1}{n+b_n}\Big)f^{\diamond
n}-1&=&\sum_{j=1}^n\Gamma\Big(\frac{1}{n+b_n}\Big)f^{\diamond
j}-\Gamma\Big(\frac{1}{n+b_n}\Big)f^{\diamond j-1}\\
&=&\sum_{j=1}^n\Gamma\Big(\frac{1}{n+b_n}\Big)f^{\diamond
j-1}\diamond\Big(\Gamma\Big(\frac{1}{n+b_n}\Big)f-1\Big).
\end{eqnarray*}
Now take the $\mathcal{L}^1(\mathbb{N}_0,2^{\mathbb{N}_0},\nu)$-norm and use the triangle
inequality:
\begin{eqnarray*}
\Big\Vert\Gamma\Big(\frac{1}{n+b_n}\Big)f^{\diamond
n}-1\Big\Vert_1&=&\Big\Vert\sum_{j=1}^n\Gamma\Big(\frac{1}{n+b_n}\Big)f^{\diamond
j-1}\diamond\Big(\Gamma\Big(\frac{1}{n+b_n}\Big)f-1\Big)\Big\Vert_1\\
&\leq&\sum_{j=1}^n\Big\Vert\Gamma\Big(\frac{1}{n+b_n}\Big)f^{\diamond
j-1}\diamond\Big(\Gamma\Big(\frac{1}{n+b_n}\Big)f-1\Big)\Big\Vert_1.
\end{eqnarray*}
An application of Theorem \ref{poisson young inequality} with
\begin{eqnarray*}
\alpha_1=\frac{n-1}{n+b_n},\quad\mbox{ and }\quad\alpha_2=\frac{b_n+1}{n+b_n}
\end{eqnarray*}
gives
\begin{eqnarray*}
\Big\Vert\Gamma\Big(\frac{1}{n+b_n}\Big)f^{\diamond
n}-1\Big\Vert_1&\leq&\sum_{j=1}^n\Big\Vert\Gamma\Big(\frac{1}{n+b_n}\Big)f^{\diamond
j-1}\diamond\Big(\Gamma\Big(\frac{1}{n+b_n}\Big)f-1\Big)\Big\Vert_1\\
&\leq&\sum_{j=1}^n\Big\Vert\Gamma\Big(\frac{1}{n-1}\Big)f^{\diamond
j-1}\Big\Vert_1\cdot\Big\Vert\Gamma\Big(\frac{1}{b_n+1}\Big)f-1\Big\Vert_1\\
&=&\Big\Vert\Gamma\Big(\frac{1}{b_n+1}\Big)f-1\Big\Vert_1\cdot\sum_{j=1}^n\Big\Vert\Gamma\Big(\frac{1}{n-1}\Big)f^{\diamond
j-1}\Big\Vert_1.
\end{eqnarray*}
Observe that employing once again inequality (\ref{poisson young}) we can bound the last sum as
\begin{eqnarray*}
\sum_{j=1}^n\Big\Vert\Gamma\Big(\frac{1}{n-1}\Big)f^{\diamond
j-1}\Big\Vert_1&\leq&
\sum_{j=1}^n\Big\Vert\Gamma\Big(\frac{j-1}{n-1}\Big)f\Big\Vert_1^{
j-1}\\
&\leq&\sum_{j=1}^n\Vert f\Vert_1^{
j-1}\\
&=& n.
\end{eqnarray*}
Here we are using the fact that $f$ is a density function (in particular is non negative and with integral with respect to $\nu$ equal to one). Therefore (recall (\ref{poisson moments})),
\begin{eqnarray*}\label{last poisson}
\Big\Vert\Gamma\Big(\frac{1}{n+b_n}\Big)f^{\diamond
n}-1\Big\Vert_1&\leq&\Big\Vert\Gamma\Big(\frac{1}{b_n+1}\Big)f-1\Big\Vert_1\cdot\sum_{j=1}^n\Big\Vert\Gamma\Big(\frac{1}{n}\Big)f^{\diamond
j-1}\Big\Vert_1\nonumber\\
&\leq& n\cdot\Big\Vert\Gamma\Big(\frac{1}{b_n+1}\Big)f-1\Big\Vert_1\\
&\leq& n\cdot\Big\Vert\Gamma\Big(\frac{1}{b_n+1}\Big)f-1\Big\Vert_2\\
&=&n\cdot\Big(\sum_{j\geq 2}a^jj!\Big(\frac{1}{b_n+1}\Big)^{2j}|\gamma_j|^2\Big)^{\frac{1}{2}} \\ 
&\leq&n\cdot\Big(\frac{1}{b_n+1}\Big)^2\Big(\sum_{j\geq
2}a^jj!|\gamma_j|^2\Big)^{\frac{1}{2}}.
\end{eqnarray*}
The last series, being equal to $\Vert f\Vert_2^2-1$, is convergent; we can therefore pass to the limit as $n$ tends to infinity and obtain the desired result. 
\end{proof}

\end{document}